\newtheorem{theorem}{Theorem}
\theoremstyle{definition}
\newtheorem{question}[theorem]{Question}
\def\cR{\mathcal{R}}
\def\QQ{\mathbb{Q}}
\def\RR{\mathbb{R}}
\def\RR{\mathbb{R}}
\def\SS{\mathbb{S}}
\def\Z{\mathbb{Z}}
\DeclareMathOperator{\conv}{conv}
\newcommand{\Mob}[1]{\operatorname{M\ddot{o}b}({#1})} %
\newcommand{\rspol}{\cR_{\text{pol}}}%
\newcommand{\rsins}{\cR_{\text{ins}}}%
\newcommand{\Sp}[1]{\SS^{#1}}%
\newcommand{\defn}[1]{\emph{#1}} 
\newcommand{\set}[2]{\ensuremath{\left\{#1\,\middle|\,#2\right\}}} %
\newcommand{\floor}[1]{\left\lfloor {#1} \right\rfloor} %
\newcommand{\ffloor}[2]{\left\lfloor{\frac{#1}{#2}}\right\rfloor} %
\newcommand{\bigslant}[2]{{\raisebox{.3em}{$#1$} \Big/ \raisebox{-.3em}{$#2$}}}
\begin{document}

\title{Six topics on inscribable polytopes
}
\author{Arnau Padrol}
\address{
 IMJ-PRG, UPMC (Paris 6), Case 247, 4 Place Jussieu, 75252 Paris Cedex 05, France}
\email{arnau.padrol@imj-prg.fr}
\author{G\"unter M. Ziegler}
\address{Institut f\"ur Mathematik, FU Berlin, Arnimallee 2, 14195 Berlin, Germany}
\email{ziegler@math.fu-berlin.de}

\thanks{This research was supported by the DFG Collaborative Research Center TRR~109 ``Discretization in Geometry and Dynamics''.}
\begin{abstract}
Inscribability of polytopes is a classic subject but also a lively research area nowadays.
We illustrate this with a selection of well-known results and recent developments 
on six particular topics related to inscribable polytopes. Along the way we collect a list of (new and old) open questions.
\end{abstract}

\maketitle

Jakob Steiner ended 
his 1832 geometry book \emph{Systematische Ent\-wicklung der Ab\-h\"an\-gig\-keit geometrischer Gestalten von einander}~\cite{Steiner1832} with a list of 85 open problems. Problem 77 reads as follows:

\begin{quotation}
77) Wenn irgend ein convexes Polyeder gegeben ist l\"a{\ss}t sich dann immer (oder in welchen F\"allen nur) irgend ein anderes, welches mit ihm in Hinsicht der Art und der Zusammensetzung der Grenzfl\"achen \"ubereinstimmt (oder von gleicher Gattung ist), in oder um eine Kugelfl\"ache, oder in oder um irgend eine andere Fl\"ache zweiten Grades beschreiben (d.h.\ da{\ss} seine Ecken alle in dieser Fl\"ache liegen oder seine Grenzfl\"achen alle diese Fl\"ache ber\"uhren)?
\end{quotation}

It asks whether every ($3$-dimensional) polytope is \defn{inscribable}; that is, whether for every $3$-polytope there is a combinatorially equivalent polytope 
with all the vertices on the sphere. And if not, which are the cases of $3$-polytopes that do have such a realization? He also asks the same question for \defn{circumscribable} polytopes, those that have a realization with all the facets tangent to the sphere, as well as for other surfaces of degree~$2$.

There was no progress on this question until 1928, when Ernst Steinitz showed that 
inscribability and circumscribability are polar concepts and presented
the first examples of polytopes that cannot be inscribed/circumscribed.
Since then, the interest 
on inscribability of polytopes has soared, partially because of tight relations
with Delaunay subdivisions and hyperbolic geometry. 
It was in the context of the latter that, more than 50 years after Steinitz's results, 
Igor Rivin finally found a characterization of $3$-dimensional inscribable
polytopes~\cite{Rivin1996}: A $3$-connected planar graph describes an inscribable polytope 
if and only if a certain inequality system has a solution.

What about other quadric surfaces? First of all, since these are not necessarily convex, 
one has to decide to either consider realizations with the vertices on the surface, or
realizations whose intersection with the surface are only the vertices. 
The weakly inscribable spherical polytopes considered in~\cite{ChenPadrol2015} belong to the first category.
For the second version of the definition, Jeffrey Danciger, Sara Maloni and Jean-Marc Schlenker have 
very recently extended Rivin's results to arbitrary quadrics in~$\RR^3$~\cite{DancigerMaloniSchlenker2015}: a $3$-polytope is inscribable in the hyperboloid or the cylinder if and only if it is inscribable in the sphere and its graph is Hamiltonian.

So, is the inscribability problem completely solved? We do not think so: Many fundamental questions in this area remain still wide open. In particular,
very little is known about inscribability for higher dimensional polytopes.
Here we present some intriguing open questions and problems motivated by some recent (and not so recent) results on inscribable polytopes.

\section{Inscribability of 3-polytopes}

Incribability and circumscribability are polar concepts: A polytope is inscribable if and only if its polar is circumscribable. Steinitz~\cite{Steinitz1928} (cf.~\cite[Thm.~13.5.2]{Gruenbaum2003})
constructed non-circumscribable polytopes using the following simple fact. 
Paint some of the facets of a $3$-polytope~$P$ black in such a way that
there are no two neighboring black facets (just like on a soccer ball). If you can paint more than half of the facets black, then $P$ is not circumscribable (unlike the soccer ball). 

His argument was the following. Observe that
each facet $F$ has a point of contact with the sphere $p_F$. We associate to each edge $e$ of $F$ the angle with which we see $e$ from $p_F$. A reflection about the plane spanned by $e$ and the center of the sphere shows that this angle is
the same for the two facets incident to $e$.
Now, if we add all these angles for the edges incident to black facets, we get $2\pi$ for each black facet. No two of these share an edge, so if we count the contributions for the white facets we should get at least the same value. However, we get at most $2\pi$ times the number of white facets, which is smaller than the number of black facets by hypothesis.\hspace*{\fill}\qed

The same argument also works if exactly half of the facets are black as long as there is at least one edge between two white facets. This provides us with our first example of a
non-circumscribable polytope: Take a simplex and truncate all its vertices.  The facets arising from the truncation do not share an edge, so painting them black shows non-circumscribability.

The polar argument says that if the graph of a $3$-polytope has an independent set (a subset of vertices no two of which are connected by an edge) of more than half of the vertices (or exactly half of them but not incident to every edge), then the polytope is not inscribable. For example, the \defn{triakis tetrahedron}, a convex polytope obtained by stacking a tetrahedron onto each facet of a tetrahedron, is not inscribable.

Steinitz's condition was later subsumed by results by Michael B. Dillencourt~\cite{Dillencourt1990} and by Dillencourt together with Warren D. Smith~\cite{DillencourtSmith1995}. In~\cite{Dillencourt1990} it is shown that the graph of any
inscribed polytope is \defn{$1$-tough}, which means that for any~$k$, removing $k$ vertices splits the graph into at most $k$ connected components.

This proves non-inscribability for polytopes with an independent set that contains more than half of the vertices,  by removing the other vertices. For independent sets that collect exactly half the vertices, we need a slight improvement from~\cite{DillencourtSmith1995}: the graph of an inscribable $3$-polytope is either bipartite (with both sides of the same size)
 or $1$-supertough, which means that for any $k\geq 2$, the removal of $k$ vertices splits the graph into less than $k$ components.

Toughness is a necessary combinatorial condition for inscribability that is easy to check, but it is not sufficient. 

Besides the mentioned necessary conditions, Dillencourt and
Smith also found some sufficient combinatorial conditions~\cite{DillencourtSmith1996}, which they summarize as: 
\emph{``If
a polyhedron has a sufficiently rich collection of Hamiltonian subgraphs, then it is of inscribable type.''} 
For example, this implies that
$3$-polytopes whose graphs are \mbox{$4$-connected}, or where
all the vertex degrees are between $4$ and $6$, are always inscribable.

So, how can we decide whether a given polytope is incribable?
Fundamental results on hyperbolic polyhedra by Rivin~\cite{Rivin1996} provided
an easy way to decide inscribability and circumscribability of $3$-polytopes~\cite{HodgsonRivinSmith1992} by linear programming 
(see also~\cite{Rivin1993}, \cite{Rivin1994}, \cite{Rivin2003}). If one identifies the
ball with the Klein model of the hyperbolic space, then an inscribed polytope is an
ideal hyperbolic polyhedron. Rivin showed that the dihedral angles at the edges 
completely characterize
these polyhedra.

\begin{theorem}[{\cite[Thm.~1]{HodgsonRivinSmith1992}}]
 A $3$-polytope $P$ is circumscribable if and only if there exist numbers 
 $\omega(e)$ associated to the edges $e$ of $P$ such that:
 \begin{itemize}
  \item[$\bullet$ ] $0<\omega(e)<\pi$,
  \item[$\bullet$ ] $\sum_{e\in F} \omega(e) =2\pi$ for each facet $F$ of $P$, and
  \item[$\bullet$ ] $\sum_{e\in C} \omega(e) >2\pi$ for each simple circuit $C$ that does not 
  bound a facet.
 \end{itemize}
\end{theorem}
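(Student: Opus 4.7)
The plan is to pass to the polar picture via Steinitz duality---$P$ is circumscribable if and only if $P^*$ is inscribable---and to identify the open ball with the Klein model of hyperbolic $3$-space, so that an inscribed realization of $P^*$ becomes an \emph{ideal} hyperbolic polyhedron whose vertices sit on the sphere at infinity. Under this polarity the facets $F$ of $P$ correspond to the ideal vertices of $P^*$, the edges of $P$ correspond bijectively to the edges of $P^*$, and simple circuits in the graph of $P$ correspond to cyclic sequences of facets of $P^*$ in which consecutive facets share an edge; such a circuit bounds a facet of $P$ precisely when the involved facets of $P^*$ share a common ideal vertex. The weights $\omega(e)$ are to be interpreted as the \emph{exterior} dihedral angles $\pi-\theta(e)$ along the corresponding edges of $P^*$.

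For necessity, given an ideal realization of $P^*$ I would cut off each ideal vertex $v$ by a small horosphere. The horosphere inherits an intrinsic Euclidean metric, and the link of $v$ inside the polyhedron is a closed Euclidean polygon whose exterior angles are precisely the $\omega(e)$ for the edges $e$ incident to $v$. Summing exterior angles of this Euclidean polygon yields $2\pi$, which is the facet equation for the facet $F=v^*$ of $P$. The bounds $0<\omega(e)<\pi$ express strict convexity along each edge of the ideal polyhedron. For a simple circuit $C$ not bounding a facet of $P$, the corresponding cyclic sequence of facets of $P^*$ has no common ideal vertex; developing these facets around the cycle into a single hyperbolic plane and applying a Gauss--Bonnet-type angle-defect computation then produces the strict inequality $\sum_{e\in C}\omega(e)>2\pi$, with strictness coming from the positive hyperbolic area that is enclosed.

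For sufficiency---the core of the theorem---I would follow Rivin's variational approach. Let $\cI(P)$ denote the moduli space of ideal hyperbolic realizations of $P^*$ modulo isometry, and let $\cW(P)\subset\RR^{E(P)}$ denote the open set of weight vectors satisfying the three stated conditions. The map $\Phi\colon\cI(P)\to\cW(P)$ sending a realization to its exterior-angle vector is well-defined by the necessity direction, and the goal is to show that $\Phi$ is surjective. The argument rests on three ingredients: (i) a Schl\"afli-type variational formula and an associated strictly concave functional (essentially a regularized version of hyperbolic volume) whose critical points over $\cW(P)$ produce honest ideal realizations, from which one extracts that $\Phi$ is a local diffeomorphism onto its image; (ii) connectedness of $\cW(P)$, which is manifest since it is an open convex subset of an affine space; and (iii) a properness argument for the variational problem. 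A standard degree-theoretic or continuity argument then forces $\Phi$ to hit every point of $\cW(P)$.

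The main obstacle is step (iii). When a sequence of ideal realizations degenerates, two ideal vertices must collide or a facet must collapse, and each such limit phenomenon produces a simple circuit $C$ in the graph of $P$ not bounding a facet whose $\omega$-sum tends to exactly $2\pi$; the strict inequality in the third bullet is therefore \emph{precisely} the combinatorial obstruction to these degenerations. Translating this geometric picture into a clean compactness theorem---ruling out that a degenerating sequence of ideal realizations could keep its angle vector inside a compact subset of $\cW(P)$---is the technical heart of~\cite{HodgsonRivinSmith1992} and the principal difficulty of the entire proof.
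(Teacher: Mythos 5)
This theorem is quoted in the paper from Hodgson--Rivin--Smith \cite{HodgsonRivinSmith1992} without proof, so the only meaningful comparison is with the cited literature, whose strategy your sketch indeed follows: polarity to reduce circumscribability of $P$ to inscribability of $P^*$, the Klein model making an inscribed $P^*$ an ideal hyperbolic polyhedron, and the reading of $\omega(e)$ as exterior dihedral angles. Your dictionary (facets of $P$ $\leftrightarrow$ ideal vertices of $P^*$, non-facial circuits of $P$ $\leftrightarrow$ dual circuits not encircling a single ideal vertex) is correct, and the horospherical-link argument for $0<\omega(e)<\pi$ and $\sum_{e\in F}\omega(e)=2\pi$ is exactly the standard one. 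But as a proof the proposal has two genuine gaps. First, the necessity of $\sum_{e\in C}\omega(e)>2\pi$ for non-facial circuits is not established by ``developing the facets around the cycle into a hyperbolic plane and applying Gauss--Bonnet'': the developed chain of ideal faces does not close up, and it is not clear what region Gauss--Bonnet should be applied to. The known proofs pass to the Gauss map into the de Sitter sphere, where the circuit becomes a closed convex space-like polygon of length $\sum_{e\in C}\omega(e)$, and invoke the Rivin--Hodgson theorem that such a polygon which is not the link of a point has length strictly greater than $2\pi$; this is itself a substantial result, not a one-line angle-defect computation.

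Second, the sufficiency argument as you set it up is circular in addition to being deferred. A local-diffeomorphism-plus-properness (degree or continuity) argument for $\Phi\colon\cI(P)\to\cW(P)$ can only show that the image is all of $\cW(P)$ \emph{if $\cI(P)$ is nonempty}, i.e.\ if $P^*$ already admits some ideal realization --- which is precisely what is to be proved. Rivin's actual route avoids this either by Alexandrov-style deformation through varying combinatorial types or by the variational method, where one maximizes a concave (volume-type) functional over a compact closure of the space of angle structures and shows the maximizer lies in the interior and is a critical point, which then \emph{produces} the realization; there the hard step is ruling out boundary maximizers, not properness of a map from an a priori possibly empty moduli space. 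Since you also explicitly defer that degeneration analysis to \cite{HodgsonRivinSmith1992}, what you have is a correct high-level outline of the known proof with its two hardest steps (the de Sitter length inequality and the compactness/boundary analysis) missing, rather than a proof.
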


Using this result, inscribability and circumscribability can be efficiently checked.
Yet, the characterization depends on a linear programming type feasibility computation.
As Dillencourt and Smith pointed out in~\cite{DillencourtSmith1995},
it is an outstanding open problem to find a graph-theoretical characterization
of inscribable $3$-polytopes. 
For simple polytopes, they found such a characterization~\cite{DillencourtSmith1995}: A simple $3$-polytope is inscribable if and only if its graph is either bipartite and has a $4$-connected dual or it is $1$-supertough.

\begin{question}[Dillencourt and Smith~\cite{DillencourtSmith1995}]
 Is there a purely combinatorial characterization of the graphs of inscribable $3$-polytopes?
\end{question}

\section{A characterization in higher dimensions}

It is not likely that there is a characterization as nice and simple as Rivin's for 
higer-dimensional inscribable polytopes. To start with,
$4$-dimensional polytopes already present \defn{universality} in the sense of Mn\"ev:
This is a very strong statement whose history took off with groundbreaking results in Nikolai Mn\"ev's PhD 
thesis~\cite{Mnev1986} \cite{Mnev1988}, and that we discuss a little further in Section~\ref{sec:universality}
(see also \cite{RichterGebert1997}).
It has several implications, among them that it is already very hard to decide whether a given face lattice corresponds to a $4$-polytope.
And as we will see later, higher-dimensional inscribable polytopes
also present universality features, so one should not expect to be able to 
decide inscribability in higher dimensions
easily or quickly.

A more realistic goal would hence be to look for strong necessary conditions 
for inscribability in higher dimensions 
as well as for good (that is, weak) sufficient conditions. 
A set of necessary conditions is available, since 
Steinitz's proof carries over directly to higher dimensions, as Branko Gr\"unbaum and 
Ernest Jucovi\v{c} have observed already in 1974~\cite{GruenbaumJucovic1974}. Let's see how.

\begin{theorem}%
 Let $P$ be a $d$-polytope with graph $G$. If $G$ has an independent set 
 that contains more than half of all the vertices, 
 or exactly half the vertices when $G$ is not bipartite,
 then $P$ is not inscribable.
\end{theorem}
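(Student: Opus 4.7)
The plan is to carry out the polar version of the 3-dimensional Steinitz argument from Section~1 in arbitrary dimension. Passing to the polar, $P$ is inscribable if and only if $P^*$ is circumscribable, vertices of $P$ correspond to facets of $P^*$, and edges of $P$ correspond to ridges (codimension-$2$ faces) of $P^*$. Thus an independent set in $G$ containing strictly more than half of the vertices (resp.\ exactly half, when $G$ is non-bipartite) translates to a coloring of strictly more than half (resp.\ exactly half) of the facets of $P^*$ black such that no two black facets share a ridge; it suffices to show that no such circumscribed $P^*$ exists.

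Assume $P^*$ is circumscribed about a sphere with center $O$, and for each facet $F$ let $p_F \in \relint F$ be its tangent point. For each ridge $r \subset F$ define a weight $\omega_F(r)$ as the $(d-2)$-dimensional solid angle at $p_F$ subtended by $r$, measured inside the $(d-1)$-dimensional space $\aff F$. Since the cones over the ridges of $F$ with apex $p_F$ partition a small neighborhood of $p_F$ in $\aff F$, we obtain $\sum_{r \subset F} \omega_F(r) = \sigma_{d-2}$, the total solid angle of a unit $(d-2)$-sphere.

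The key symmetry step is to show $\omega_F(r) = \omega_{F'}(r)$ whenever $F$ and $F'$ share a ridge $r$. Since $\aff F$ is the tangent hyperplane to the sphere at $p_F \neq O$, it misses $O$, and in particular $O \notin \aff r$, so $H_r := \aff(r \cup \{O\})$ is a hyperplane. Reflection through $H_r$ fixes $O$ and hence preserves the sphere, while fixing $r$ pointwise; it therefore interchanges the two tangent hyperplanes of the sphere containing $\aff r$---namely $\aff F$ and $\aff F'$---sending $p_F$ to $p_{F'}$ and $F$ onto $F'$. The two weights therefore agree; denote their common value by $\omega(r)$.

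A double count now finishes the proof. Writing $B$ and $W$ for the sets of black and white facets,
\[
|B|\,\sigma_{d-2} \;=\; \sum_{F \in B}\sum_{r \subset F}\omega(r) \;=\; \sum_r \omega(r)\,|\{F \in B : r \subset F\}|.
\]
Each ridge lies in exactly two facets; since no two black facets share a ridge, only ridges meeting one black and one white facet contribute, each contributing $\omega(r)$ once. The analogous expansion of $|W|\,\sigma_{d-2}$ majorizes this sum, giving $|W|\,\sigma_{d-2} \geq |B|\,\sigma_{d-2}$, with equality iff no ridge joins two white facets. If $|B| > |W|$ this is already a contradiction; if $|B| = |W|$ and $G$ is not bipartite, then the whites do not form an independent set, so some ridge joins two white facets, making the inequality strict and again impossible. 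The only delicate step is the reflection argument---confirming that $H_r$ is really a hyperplane and that reflecting through it truly swaps the two facets incident to $r$---after which everything else is a dimensional transcription of the planar Steinitz count.
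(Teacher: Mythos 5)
Your proposal is correct and follows essentially the same route as the paper: pass to the polar, assign to each ridge the solid angle it subtends from the tangent point of an incident facet, use a reflection through the hyperplane spanned by the ridge and the sphere's center to see this angle is facet-independent, and double count over a black/white coloring of the facets. The only small overstatement is that the reflection sends $F$ onto $F'$ --- in general it only sends $\aff F$ onto $\aff F'$ and the cone over $r$ with apex $p_F$ onto the cone over $r$ with apex $p_{F'}$, which is all the argument needs.
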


\begin{proof}
Again, we prove the polar statement. Assume that $P^\circ$ is circumscribed. Each facet~$F$
touches the ball in a single point~$p_F$. To each of the facets of $F$, which are ridges of $P^\circ$, 
we can intersect the cone
with apex $p_F$ spanned by the ridge with a small ball centered at $p_F$. The (normalized) solid angle associated to the ridge is the ratio of the volume of this intersection to the volume of the ball. Again, a reflection shows that the solid angle associated to each ridge does not depend on which of the two incident facets we take the apex from.

Now assume that the facets of $P^\circ$ are painted in black and white, in such a way that there are no two neighboring black facets. If we add the contributions of the angles of the ridges associated to black facets, it should be at most (or less than if the dual graph is not bipartite) the sum of the contributions for the white facets. 
Since the sum along each 
facet is~$1$, the number of black facets cannot exceed the number of white facets, and they
also cannot be the same if there are two adjacent white facets. %
\end{proof}

However, the more general necessary conditions by Dillencourt, such as $1$-toughness, have not yet been generalized to higher dimensions, as far as we know; neither
have the sufficient conditions of Dillencourt and Smith. Is it true that if the graph of a
$d$-polytope has a rich enough structure of Hamiltonian subgraphs then the corresponding polytope is inscribable? This could explain the phenomena observed in the next section.

\begin{question}
 Find strong necessary and sufficient conditions for inscribability of higher-dimensional polytopes.
\end{question}

Moritz Firsching recently achieved a complete enumeration of the simplicial $4$-polytopes with $10$ vertices~\cite{Firsching2015}: There are exactly $162\,004$ combinatorial types.
Firsching also managed to decide inscribability for all but 13 of these polytopes: The remaining
$161\,991$ are divided into $161\,978$ inscribable and $13$ non-inscriable.
So, most simplicial $4$-polytopes with $10$ vertices are inscribable. 
Moreover, the very few that are not inscribable have very few facets and edges.
More precisely, every non-inscribable simplicial $4$-polytope with $10$ vertices
has less than $27$ facets and less than $37$ edges, but $146\ 104$ out of the $162\ 004$
simplicial $4$-polytopes with $10$ vertices do have at least $27$ facets and $37$ edges.
This seems indicate that there might be combinatorial sufficient conditions for
inscribability based only on the $f$-vector.

\begin{question}
 Is it true that most simplicial $4$-polytopes on $n$ vertices are inscribable, for $n\rightarrow \infty$?
\end{question}

In 1991 Smith \cite{Smith1991} proved that although there are exponentially many inscribable and circumscribable $3$-polytopes with $n$ vertices (because there are many that are $4$-connected), most simplicial $3$-polytopes with $n$ vertices are neither inscribable nor circumscribable. The proof consists in showing that the probability of finding a fixed non-inscribable subgraph in a random $3$-connected planar triangulations tends to $1$ as $n\to \infty$. 
The same argument can be used on random $3$-connected planar graphs, see~\cite[Thm.~2 and Cor.~1]{BenderGaoRichmond1992}, which shows that most combinatorial types of $3$-polytopes on a given large number of edges are not inscribable/circumscribable.
This contrasts with what happens with simplicial $3$-polytopes with few vertices. In the same paper~ \cite{Smith1991}, Smith classified these according to their inscribability and circumscribility, and most turned out to be both inscribable and circumscribable. 
The referee for this paper suggested to use the strategy for $3$-polytopes in higher dimension: To show that a large random simplicial $d$-polytope is likely to contain a fixed non-inscribable subcomplex.

\section{Neighborly polytopes}

What is the maximal number of faces that a $d$-dimensional polytope with $n$ vertices can have? The answer to this fundamental question in the combinatorial theory of
polytopes was not established until 1970, by Peter McMullen \cite{McMullen1970},
although Theodore Motzkin had already guessed the answer, 
as we know from a 1957 abstract~\cite{Motzkin1957}. And the
answer is that the \defn{cyclic $d$-polytopes with $n$ vertices} have the maximal number of $k$-faces among all $d$-polytopes with $n$ vertices (for all $k$!). This is the polytope one obtains
when taking the convex hull of $n$ points on the moment curve given by $\gamma(t):=(t,t^2, t^3,\dots, t^d)$.

The cyclic polytopes owe their name to David Gale~\cite{Gale1963}, one of several (re-)\allowbreak inventors
(cf.~\cite[Sec.~7.4]{Gruenbaum2003}), although they were essentially
already known to Constantin Carath\'eodory in 1911~\cite{Caratheodory1911}, 
who had studied the convex hull of the \defn{trigonometric moment curve} 
$\tau(t)= (\sin t, \cos t, \sin 2t, \cos 2t, \dots, \sin kt, \cos kt)$. The
representation on the trigonometric moment curve, which is projectively equivalent to the monomial one \cite[pp.~75/76]{Ziegler1995},
shows
that in even dimensions, the cyclic polytopes are inscribable. Since then, several inscribed realizations of the cyclic polytope 
have been found (also in odd dimensions), see \cite[p.~67]{Gruenbaum2003}, \cite[p.~521]{Seidel1991} and~\cite[Prop.~17]{GonskaZiegler2013}. 
Thus the upper bound theorem is established for inscribed polytopes too.

The next natural question asks for \emph{all} the polytopes that have this many facets. 
McMullen also provided the answer to this: This characterizes the simplicial \defn{neighborly polytopes}. 
These are the simplicial polytopes with a complete $\ffloor{d}{2}$-skeleton. 
(All even-dimensional neighborly polytopes are simplicial, but not in general the odd-\allowbreak dimen\-sional ones. In particular, not all odd-dimensional neighborly polytopes have the maximal number of facets, only those that are simplicial.)
Even if Motzkin claimed that the cyclic polytopes are the only neighborly polytopes 
(in the same 1957 abstract~\cite{Motzkin1957} mentioned above), there are actually plenty.
The number of neighborly $d$-polytopes with $n$ vertices grows at least as $n^{\floor{d/2}n(1-o(1))}$ for fixed~$d$~\cite{Padrol2013}. Compare this to the number of $d$-polytopes with $n$ vertices, which is not larger than $n^{d^2n(1+o(1))}$~\cite{Alon1986}. As it turns out, each one of the neighborly polytopes used to provide this lower bound is also inscribable~\cite{GonskaPadrol2015} (for $d\geq4$). And even more, as we discuss in the next section, they are inscribable in any strictly convex body! This surprising behavior let Gonska and Padrol \cite{GonskaPadrol2015} to ask:

\begin{question}[Gonska and Padrol~\cite{GonskaPadrol2015}]
Is every (even-dimensional) neighborly polytope inscribable?
\end{question}

Moritz Firsching~\cite{Firsching2015} undertook a quest to find a counterexample. He successfully used non-linear optimization techniques to find polytope realizations. In particular, 
he tried to inscribe the neighborly polytopes enumerated in~\cite{MiyataPadrol2015}. The results are surprising: 
Every neighborly $4$-polytope with $n\leq 11$ vertices, every simplicial neighborly $5$-polytope with $n\leq 10$ vertices, every neighborly $6$-polytope with $n\leq 11$ vertices, and every simplicial neighborly $7$-polytope with $n\leq 11$ vertices is inscribable! Even more,
every simplicial $2$-neighborly $6$-polytope with $n\leq 10$ vertices is also inscribable. This lead Firsching to ask whether the even stronger statement that all $2$-neighborly polytopes are inscribable might be true~\cite[Conj.~1]{Firsching2015}. This may be a very brave conjecture, for which not much evidence is available yet. However, recall that the graph of a $2$-neighborly polytope is complete, and hence has the richest possible structure of Hamiltonian subgraphs~\dots
 
\begin{question}[Firsching~\cite{Firsching2015}]
 Is every (simplicial) $2$-neighborly polytope inscribable?
\end{question}

We can also ask the polar question: What about circumscribability of neighborly polytopes? The results are completely opposite. Chen and Padrol proved that, for any $d\geq 4$, no cyclic $d$-polytope on sufficiently many vertices is circumscribable~\cite{ChenPadrol2015}. (The proof will be sketched below 
in the last section.) They even conjecture that this holds in more generality for neighborly polytopes~\cite[Conj. 7.4]{ChenPadrol2015}. Notice that, since neighborliness can be read on the $f$-vector, this would imply that there is an $f$-vector of a convex polytope that does not belong to any inscribable polytope, namely that of the polar of a neighborly $4$-polytope with sufficiently many vertices. No example of such an $f$-vector has been established so far. Actually, what is known points into the other direction: Every $f$-vector 
of a $3$-dimensional polytope is inscribable~\cite{GonskaZiegler2013}. However, $f$-vectors that are not 
{$k$-scribable} are known for $d\geq 4$ and $1\leq k\leq d-2$ \cite{ChenPadrol2015}.
(A polytope is \emph{$k$-scribable} if it has a realization with all its $k$-faces tangent to the sphere.)

\begin{question}[Gonska and Ziegler~\cite{GonskaZiegler2013}]
 Is there an $f$-vector that is not inscribable?
\end{question}

\section{Universally inscribable}

As we just mentioned, the proof of inscribability for cyclic and many more neighborly polytopes 
given in~\cite{GonskaPadrol2015} still works when we replace the unit ball by any other smooth strictly convex body. 
(This is what Oded Schramm called an \defn{egg} in his celebrated paper
\emph{``How to cage an egg?''}~\cite{Schramm1992}.) Here smoothness is not very important, but strict convexity is. For example, the pigeonhole principle tells us that no simplicial $d$-polytope with more than $d(d+1)$ vertices can be inscribed on the boundary of a $d$-simplex.

Hence many neighborly polytopes (among them all cyclic polytopes) are \defn{universally inscribable}: They can be inscribed into %
any egg. Other examples of such polytopes are the stacked $d$-polytopes arising as a join of a path with a $(d-2)$-simplex; and also Lawrence polytopes~\cite{GonskaPadrol2015}. 

\begin{question}[Gonska and Padrol~\cite{GonskaPadrol2015}]
Which polytopes are inscribable into the boundary of every (smooth) strictly convex body?
\end{question}

An observation of Karim Adiprasito~\cite{GonskaPadrol2015} shows that being inscribable on the sphere is not sufficient for being universally inscribable. (The proof uses projectively unique polytopes~\cite{AdiprasitoZiegler2015}.)
We thank the anonymous reviewer for suggesting the following opposite question. (The reviewer's conjectured answer is yes.)

\begin{question}
Are there polytopes that are inscribable in every egg other than the ellipsoid?
\end{question}

The celebrated Koebe--Andreev--Thurston Theorem states that every $3$-polytope has a realization with all its edges tangent to the sphere. This amazing result has a long history. It seems that it was first proved by Paul Koebe, but only for simple and simplicial polytopes~\cite{Koebe1936}. Bill Thurston later realized~\cite{Thurston1997} that it followed from results of 
Eugene M. Andreev on hyperbolic polyhedra~\cite{Andreev1971} \cite{Andreev1971_2}. Since then, several proofs have been found
 (see~\cite[Section~1.3]{Ziegler2007} and references therein).
Schramm went even further and proved that every $3$-polytope has a realization with all its edges tangent to any given egg~\cite{Schramm1992}.

\begin{theorem}[Schramm~\cite{Schramm1992}]
 For every $3$-polytope $P$, and every smooth strictly convex body $K$, there is a realization $Q$ of $P$ such that each edge of $Q$ is tangent to $K$.
\end{theorem}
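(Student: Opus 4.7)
My plan is a continuity argument, deforming the Koebe-Andreev-Thurston realization (midscribed in the round ball $B$) along a continuous path of smooth strictly convex bodies ending at $K$. Since such bodies form a path-connected space in the $C^2$-Hausdorff topology, fix a continuous family $(K_t)_{t \in [0,1]}$ with $K_0 = B$ and $K_1 = K$, each smooth and strictly convex.

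For each body $L$ in the family, let $\cM(L)$ denote the set of realizations of $P$ in $\RR^3$ whose edges are all tangent to $L$. A degree-of-freedom count shows $\cM(L)$ is, at nondegenerate points, a smooth $6$-dimensional manifold: starting from the $3\card{V}$ coordinates of the vertices, edge-tangency imposes $\card{E}$ conditions and the coplanarity of each face imposes another $2\card{E}-3\card{F}$ conditions, leaving $3\card{V}-3\card{E}+3\card{F}=6$ free parameters by Euler's formula (and matching the dimension of the Möbius group when $L=B$). Assemble the total space $\cE = \{(t,Q) : Q \in \cM(K_t)\}$ with projection $\pi \colon \cE \to [0,1]$. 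I would then show $\pi(\cE) = [0,1]$ by proving its image is both open (from submersivity at smooth points via the implicit function theorem applied to the tangency and planarity equations) and closed in $[0,1]$; since $0 \in \pi(\cE)$ by Koebe-Andreev-Thurston, this forces $1 \in \pi(\cE)$ and produces the desired midscribed realization $Q$ of $P$ in $K$.

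The critical step, and the main obstacle I anticipate, is establishing closedness of $\pi(\cE)$: given $t_n \to t_*$ with realizations $Q_n \in \cM(K_{t_n})$, one must extract a subsequence $Q_{n_k} \to Q_* \in \cM(K_{t_*})$, after suitable normalization (for instance pinning down the images of three vertices by a projective transformation that preserves $K_{t_n}$). This requires ruling out all possible degenerations along the deformation: vertices escaping to infinity, two distinct vertices collapsing, or tangency points of distinct edges coalescing on $\partial K_{t_*}$. Strict convexity of each $K_t$ controls tangent cones from outside, and the $3$-connected planar combinatorics of $P$ should provide the rigidity that obstructs such collapses, but converting these into a clean compactness theorem is the heart of the matter. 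In \cite{Schramm1992}, Schramm supplies the required non-degeneracy via a discrete maximum-principle / monotonicity argument on the horizon curves traced on $\partial L$ by tangent cones from each vertex, playing for smooth convex bodies the role that the classical extremal-length argument plays for circle packings on the sphere.
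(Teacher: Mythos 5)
The paper itself does not prove this theorem: it is stated with a citation to Schramm's \emph{How to cage an egg}, so the only thing to compare your proposal with is Schramm's original argument, which is indeed a deformation/continuity argument of roughly the shape you describe. The difficulty is that, as written, your text is an outline rather than a proof: the two steps that carry all of the mathematical content are asserted, not established. For openness of $\pi(\cE)$ you need the tangency-plus-planarity map to be a submersion at a midscribed realization; the count $3\card{V}-3\card{E}+3\card{F}=6$ only gives the expected dimension, and the transversality (an infinitesimal-rigidity type statement) is a genuine theorem in \cite{Schramm1992}, not a consequence of counting equations. More seriously, closedness --- which you yourself identify as the heart of the matter --- is simply delegated back to \cite{Schramm1992}, which makes the argument circular as a proof of Schramm's theorem: the compactness/non-degeneracy statement essentially \emph{is} the theorem, since everything else follows from Koebe--Andreev--Thurston plus soft topology.

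There is also a concrete flaw in the one device you do propose for the compactness step: a generic smooth strictly convex body has no nontrivial projective (or M\"obius-like) symmetries, so there is no ``projective transformation that preserves $K_{t_n}$'' available to pin down three vertices; that normalization exists only for the round ball. To make a compactness argument possible one has to exploit that edge-tangency is projectively natural and let the normalizing transformations act on the body together with the polytope (i.e., work with pairs modulo the projective group), or else prove a properness statement for the unnormalized family directly; in Schramm's paper this is a substantial piece of work, not a remark, and it is precisely what is missing here. So the overall strategy is the right one, but the proposal has a genuine gap at its central step.
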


There is also the other side of universal inscribability, which would be to focus on the convex bodies. The mathoverflow user called Gregor Samsa asked~\cite{SamsaMathoverflow}:

\begin{quotation}
Is there a convex body in $\RR^d$ such that every combinatorial type of a $d$-dimensional convex polytope can be realized with vertices on its surface?
\end{quotation}
We reproduce the beautiful affirmative answer by Sergei Ivanov~\cite{IvanovMathoverflow}:

\begin{quotation}
Yes, there is such a body. Actually there is one very close to the standard unit ball and containing disjoint representatives of each combinatorial type (but these representatives are very small).

Indeed, every combinatorial type of a $d$-polytope has a realization which looks as follows: there is a ``large'' $(d-1)$-dimensional facet and the remaining surface is a graph over this facet. To construct such a realization, choose a $(d-1)$-facet, pick a hyperplane parallel to it and very close to it (but not intersecting the polytope), and apply a projective map which sends this hyperplane to infinity.

We can further ``flatten'' this realization so that it is very close to its large face. Choose a very small $\varepsilon> 0$, apply a homothety such that the diameter of the polytope becomes less than $\varepsilon$, and place the resulting tiny polytope so that it touches the sphere by a point on its ``large'' face. Then consider the convex hull of the sphere and the polytope. All vertices will be on the boundary of this convex hull if the polytope is sufficiently ``flattened.'' And the convex hull diverges from the ball only in a neighborhood of size $\sim\!\sqrt{\varepsilon}$.

Now pick another combinatorial type of a polytope and repeat the procedure with a much smaller $\varepsilon$ and a location on the sphere chosen so that the neighborhood affected by the second polytope does not interfere with the first one. And so on. Since there are only countably many combinatorial types, they all can be packed into the sphere, provided that $\varepsilon$ goes to $0$ sufficiently fast.
\end{quotation}

\section{Universality}\label{sec:universality}

We move on to a slightly different topic, and a different notion of ``universality.'' Take an inscribable polytope~$P$, and consider the set of all inscribed realizations of~$P$. How does this set look like? To start off, it can be parametrized by the vertex coordinates. Moreover, since any M\"obius transformation that preserves the unit sphere as well as its interior sends an inscribed polytope onto an inscribed polytope, we can safely mod out the action of this group. 
This defines $\rsins(P)$, the \defn{realization space of an inscribed polytope}~$P$ (with $n$ vertices in~$\RR^d$):
\[\rsins(P)=\bigslant{\set{A\in (\Sp{d-1})^n}{\conv(A)\simeq P}}{\Mob{\Sp{d-1}}}.\]

This concept is inspired by an analogue definition for general polytopes.
In that setting, one considers $\rspol(P)$, the set of all realizations of $P$, up to affine 
transformation. From Steinitz's proofs of Steinitz's Theorem \cite{StRa} we know that these sets are relatively
nice when~$P$ is of dimension at most~$3$: They are contractible, contain rational points, etc.; 
see \cite{RichterGebert1997} \cite{RichterGebertZiegler1995}. For higher-dimensional polytopes,
however, the behaviour of realization spaces is much wilder.  This was first observed by Mn\"ev, 
with his celebrated Universality Theorem~\cite{Mnev1986} \cite{Mnev1988} for polytopes and oriented matroids. The polytopal version reads: For every primary basic [open] semi-algebraic set defined over $\Z$ there is a [simplicial] polytope whose realization space is stably equivalent to it. This has many consequences, among them: 
\begin{itemize}
 \item topological: $\rspol(P)$ can have the homotopy type of any arbitrary finite simplicial complex, 
 \item algebraic: there are polytopes that cannot be realized with rational coordinates,
 \item algorithmic: it is ETR-hard to decide if a lattice is the face lattice of a polytope. 
\end{itemize}
Mn\"ev's proof provided polytopes with the desired realization spaces, but could not say anything about their dimensions. Another major step was done later by J\"urgen Richter-Gebert, who proved that there is universality already for $4$-dimensional polytopes~\cite{RichterGebert1997}.

The inscribed picture is similar. Up to dimension~$3$, inscribed realization spaces are reasonable. This follows from the results of Rivin that we have already mentioned \cite{Rivin1994}, which imply that for a $3$-polytope $P$, $\rsins(P)$ is homeomorphic to the polyhedron of angle structures (and hence are contractible). In contrast, results of Adiprasito and Padrol with Louis Theran show that, in arbitrarily high dimensions, there is again universality~\cite{AdiprasitoPadrolTheran2015}. We have not found yet an appropriate notion of stable equivalence for this context, while a universality theorem for general polytopes without stable equivalence is neither available nor in sight.
This forces us to separate the topological, algebraic and algorithmic statements:

\begin{theorem}[Adiprasito, Padrol~\& Theran~\cite{AdiprasitoPadrolTheran2015}]\label{thm:universalityInscribed}\leavevmode
\nopagebreak
\begin{itemize}
 \item For every primary basic semi-algebraic set there is an inscribed polytope whose  realization space is homotopy equivalent to it.
 \item For every finite field extension $F/\QQ$ of the rationals, there is an inscribed polytope that cannot be realized with coordinates in $F$. 
 \item The problem of deciding if a poset is the face lattice of an inscribed [simplicial] polytope
 is polynomially equivalent to the \emph{existential theory of the 
reals (ETR)}. In particular, it is NP-hard.
\end{itemize}
\end{theorem}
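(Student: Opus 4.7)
The plan is to import the Mn\"ev--Richter-Gebert universality machinery for ordinary polytopes into the inscribed setting. The starting point is Richter-Gebert's theorem: for every primary basic semi-algebraic set $S$ defined over $\ZZ$, there is a $4$-polytope $Q_S$ whose realization space $\rspol(Q_S)$ is stably equivalent (hence homotopy equivalent, and arithmetically faithful) to~$S$. The heart of the argument is then to construct an "inscription gadget" that lifts $Q_S$ to an inscribed polytope $P_S$ in some bounded higher dimension whose inscribed realization space $\rsins(P_S)$ reflects the same topology and arithmetic.

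For the gadget, given $Q_S$ I would build a simplicial inscribed polytope $P_S$ that contains a combinatorial copy of $Q_S$ attached to a rigid scaffold of vertices on $\Sp{d-1}$. The scaffold should be assembled from projectively unique inscribed pieces, in the spirit of Adiprasito--Ziegler, so that any inscribed realization of $P_S$, after quotienting by $\Mob{\Sp{d-1}}$, is forced into a canonical affine chart inside which the $Q_S$-subcomplex realizes $Q_S$ up to affine equivalence. The essential local verification is that the natural forgetful map $\rsins(P_S) \to \rspol(Q_S)$ is a fibration with contractible fibers and preserves the field of definition of the coordinates, so that it descends to a homotopy equivalence while transporting arithmetic constraints.

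The three consequences then follow almost formally. For the topological statement, take an arbitrary primary basic semi-algebraic set $S$, apply Richter-Gebert to obtain $Q_S$, and use the gadget to get $\rsins(P_S) \simeq \rspol(Q_S) \simeq S$. For the algebraic statement, given a finite extension $F/\QQ$, choose $S$ to be a singleton at an algebraic point whose coordinates generate a field not contained in~$F$; the gadget pins $P_S$ down (modulo $\Mob{\Sp{d-1}}$) to essentially a unique inscribed realization whose coordinates cannot all lie in $F$. For the algorithmic statement, membership in ETR is immediate, since inscribability of a candidate face lattice $\cL$ is expressible by the existential theory of polynomial conditions (each vertex lies on $\Sp{d-1}$, each facet functional has the correct sign on every vertex); ETR-hardness reduces from the general-polytope problem via $S \mapsto P_S$, so that deciding $\rsins(P_S) \neq \emptyset$ is equivalent to deciding $S \neq \emptyset$. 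NP-hardness is then standard, as NP reduces to ETR.

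The hardest step is undoubtedly the construction of the inscription gadget. On the one hand, one must control the extra degrees of freedom introduced by both the sphere constraint and the M\"obius action; on the other, the scaffold must not introduce spurious components of $\rsins(P_S)$ or obstruct inscribability. The conceptual obstacle is that inscribability is a rigid algebraic condition while universality demands arbitrary arithmetic flexibility: bridging this gap requires inscribed analogues of Richter-Gebert's algebraic gadgets (addition, multiplication, and so on), each implemented on a spherical substrate modulo $\Mob{\Sp{d-1}}$. Assembling and verifying these gadgets, and checking that the resulting realization space fibers over $\rspol(Q_S)$ with contractible fibers, is the technical core of the argument.
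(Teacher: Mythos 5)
This survey does not actually prove the theorem: it is quoted from Adiprasito--Padrol--Theran, and the text only records the strategy, namely to start from polytopes with intricate realization spaces (Mn\"ev's construction, which lives in \emph{arbitrarily high} dimension) and to show that their inscribed realization spaces are equally involved. Your high-level plan of transferring universality from the ordinary to the inscribed setting matches that strategy in spirit, but the proposal has a genuine gap: everything is delegated to an ``inscription gadget'' that you never construct. A scaffold of projectively unique inscribed pieces that pins down a canonical chart modulo $\Mob{\Sp{d-1}}$, together with a forgetful map $\rsins(P_S)\to\rspol(Q_S)$ that is a fibration with contractible fibers and preserves fields of definition, is precisely the technical heart of the theorem, not an almost formal step. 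Worse, by starting from Richter-Gebert's $4$-dimensional theorem and claiming the gadget stays in bounded dimension, you are implicitly claiming to solve what the paper explicitly poses as an open question (universality for inscribed polytopes in bounded dimension, e.g.\ $4$-polytopes inscribed in $\Sp{3}$); the actual result is only known in unbounded dimension, via Mn\"ev rather than Richter-Gebert, so this part of your route would fail or at least requires an argument nobody currently has.

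A second, related problem is that you assume stable-equivalence-type control transports to the inscribed setting: your fibers are supposed to be simultaneously contractible and arithmetically faithful. The paper points out that no appropriate notion of stable equivalence is available for inscribed realization spaces, and that this is exactly why the theorem must be split into separate topological, algebraic, and algorithmic statements; your proposal glosses over this by treating all three as corollaries of one equivalence. You also over-claim in the simplicial case: you build a \emph{simplicial} $P_S$ with $\rsins(P_S)$ homotopy equivalent to $S$, whereas the known simplicial statement is weaker (the inscribed realization space only retracts onto the semi-algebraic set), and simpliciality enters the theorem only in the algorithmic item. Your ETR-membership observation and the reduction scheme for hardness are fine in outline, but they too rest on the unconstructed gadget and on producing the face lattice of $P_S$ in polynomial time from the defining data of $S$, which must be argued.
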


In the last point we can even ask the polytopes to be simplicial. This follows from a weak universality theorem for inscribed simplicial polytopes, also from~\cite{AdiprasitoPadrolTheran2015}. In this case, we can only find polytopes whose realization space retracts onto the semi-algebraic set, instead of having homotopy equivalence as in the general case. 

The inscribed analogue to Richter-Gebert's result for $4$-polytopes is still missing.

\begin{question}[Adiprasito, Padrol and Theran~\cite{AdiprasitoPadrolTheran2015}]
Is there universality for inscribed polytopes in bounded dimension, say for $4$-polytopes inscribed into $\Sp{3}$?
\end{question}

The proof of Theorem~\ref{thm:universalityInscribed} strongly relies on the results of Mn\"ev. The strategy is to start with certain polytopes with intricate realization spaces, and then
to show that their inscribed realization spaces are equally involved. In particular, it does not prove that it is hard to decide inscribability once we already know that the face lattice corresponds to a polytope.
However, inscribability is itself a complex condition, and hence one can expect that it increases the complexity
of the corresponding realization spaces. This could lead to a proof of universality that is intrinsic to inscribed polytopes, and hopefuly to advances in the previous question. 
A first step in this direction could be to find a polytope~$P$ such that $\rsins(P)$ is disconnected while $\rspol(P)$ is not.
\begin{question}
Is there universality for the realization spaces $\rsins(P)$ 
for a class of inscribable polytopes~$P$ whose general realization spaces $\rspol(P)$ are trivial
(in particular, contractible)?
\end{question}

\section{(\emph{i,j})-scribability}

We have already referred to the Koebe--Andreev--Thurston Theorem. One is tempted
to ask if similar behaviors might also appear in higher dimensions. The answer is no.
Egon Schulte used an inductive argument over non-inscribable/non-circumscribable $3$-polytopes to show that, for every $d\geq 4$, and every $0\leq k\leq d-1$, 
there are $d$-polytopes that cannot be realized with all their $k$-faces tangent to the sphere \cite{Schulte1987}. Of course, this opens the door to ask for a characterization of $k$-scribable polytopes. There are almost no results in this direction, which is definitely an interesting research topic.

We will, however, consider a different problem. If we take an edge-scribed realization of a $3$-polytope, it has also the following property: All the vertices are outside the ball while all the facets cut the ball. This kind of realizations are studied in~\cite{ChenPadrol2015}. Here a polytope is said to be \defn{$(i,j)$-scribed} if all its $i$-faces ``avoid'' the sphere while all the $j$-faces ``cut'' it.
The definitions for cutting and avoiding that seem to work better say that a face $F$ \defn{cuts} the ball if there is a point of the unit ball in its relative interior, and that it \defn{avoids} the ball
if there is a hyperplane $H$, supporting for~$F$, which completely contains $P$ and the ball in one of the closed halfspaces it defines. (This somehow involved definition is needed if we want a self-dual concept that reduces to classical $k$-scribability when $i=j=k$.)

The weakest condition occurs when $i=0$ and $j=d-1$:
\begin{question}
Does every polytope have a realization where every vertex avoids the ball and every
facet cuts the ball?
\end{question}

The answer is most probably no, but so far no proof has been given (although Karim Adiprasito
has suggested that one should be able to obtain counterexamples from glueing some of the large
projectively unique polytopes 
from \cite{AdiprasitoZiegler2015}). 
There are some results in the opposite direction: For $d=3$, 
the edge-scribed realization is also $(0,d-1)$-scribed.
Every inscribable 
polytope has directly a $(0,d-1)$-scribed realization too. So, in particular, cyclic polytopes and many (all?) neighborly polytopes have such realizations.
And obviously, circumscribable polytopes also have such a realization. This includes stacked polytopes, which are always
circumscribable~\cite{ChenPadrol2015}.

What about other values of $i$ and $j$? The best offenders found in \cite{ChenPadrol2015} are even-dimensional cyclic polytopes (and their duals):

\begin{theorem}[Chen and Padrol \cite{ChenPadrol2015}]
    If an even-dimensional cyclic polytope has sufficiently many vertices, then it is not $(1,d-1)$-scribable (which in particular implies that it is not circumscribable). 
\end{theorem}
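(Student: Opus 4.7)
Following Chen and Padrol, the strategy is to adapt Steinitz's angle-counting argument for non-circumscribability to the $(1,d-1)$-scribed setting, and then to exploit the rigid combinatorics of even-dimensional cyclic polytopes to force a quantitative contradiction. Suppose, for contradiction, that $P$ is a $(1, d-1)$-scribed realization of $C_d(n)$ with $d = 2k$ even and $n$ large. Two ingredients are available: the facet-cutting condition picks a point $q_F \in \relint(F) \cap \intr(B)$ for each facet $F$, playing the role of Steinitz's tangent points, and the edge-avoiding condition provides a separating hyperplane $H_e$ through each edge $e$ with both $P$ and $B$ on one side.

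The first step is to set up a facet-based angle identity. For each ridge $R \subset F$, let $\omega(R,F)$ be the normalized solid angle at $q_F$ of the cone over $R$ inside $\aff(F)$; then $\sum_{R \subset F} \omega(R,F)=1$ by construction. Summing over all facets and regrouping by ridges,
\[
f_{d-1}(P) \;=\; \sum_R \bigl(\omega(R, F_R) + \omega(R, F'_R)\bigr),
\]
where $F_R, F'_R$ are the two facets sharing $R$.

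The second, crucial step is to turn the edge-avoidance condition into a non-trivial bound on the right-hand side. In Steinitz's circumscribable argument one uses a reflection across the hyperplane through $R$ and the ball's centre to deduce $\omega(R,F) = \omega(R,F')$, which is the source of all the numerical traction. Here the tangent points are replaced by genuinely distinct interior points, so the reflection symmetry fails and one must use $H_e$ directly. Working in the $2$-plane normal to any edge $e \subset R$, the ball and the polytope slice to planar objects, and the separation provided by $H_e$ upper-bounds the angular wedge available at the apex of the planar slice formed by the two facets, extracting a strictly positive defect $c(R) > 0$ at every ridge. Aggregating the defects across all ridges and inserting the combinatorial data of $C_d(n)$---namely $f_{d-1} = \Theta(n^k)$, $2 f_{d-2} = d \cdot f_{d-1}$ from simpliciality, and $f_1 = \binom{n}{2} = \Theta(n^2)$---yields a master inequality between the facet count, the ridge count, and an edge-supported term that is violated for $n$ sufficiently large.

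The main obstacle is producing the uniform ridge defect $c(R) > 0$ in a M\"obius-equivariant fashion. The natural choice $q_F := \arg\min\{|x| : x \in F \cap B\}$ should make $c(R)$ depend only on the relative position of $B$ and the two facet-slices at $R$, so that the planar estimate in the edge-transverse $2$-plane is tractable; making $c(R)$ strong enough, and uniform enough across the $\Theta(n^k)$ ridges of $C_d(n)$, to overwhelm the $\Theta(n^k)$ facet count is the central technical hurdle, and is where the even-dimensionality of $d$ and the $2$-neighborliness of $C_d(n)$ enter in an essential way.
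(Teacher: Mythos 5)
Your plan has a genuine gap at exactly the point you yourself flag as the ``central technical hurdle,'' and there is no indication it can be closed. The identity $f_{d-1}(P)=\sum_R\bigl(\omega(R,F_R)+\omega(R,F'_R)\bigr)$ holds for every simplicial polytope and every choice of interior points $q_F$, so by itself it carries no information about the sphere; all the content would have to come from the claimed uniform ridge defect $c(R)>0$, which is never produced. Steinitz's reflection argument works only because $p_F$ is the point of tangency, which forces $\omega(R,F)=\omega(R,F')$; once facets merely cut the ball, the points $q_F$ are essentially arbitrary and no relation between the two solid angles at a ridge survives. The proposed repair is also dimensionally incoherent for $d\geq 4$: the orthogonal complement of an edge is $(d-1)$-dimensional, not a $2$-plane, and a supporting hyperplane $H_e$ through an edge (a $1$-face) does not control the dihedral geometry at a ridge (a $(d-2)$-face). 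Finally, even granting some positive defect per ridge, you never state the ``master inequality'' it is supposed to violate --- the angle sums equal $f_{d-1}$ identically --- and ``sufficiently many vertices'' plays no identifiable role in your outline, whereas it is essential in the actual statement.

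The proof in the paper is of a completely different nature, and its ingredients are precisely what your outline lacks. To each vertex of a putative $(1,d-1)$-scribed realization one associates the spherical cap of points of $\SS^{d-1}$ visible from it. Oriented-matroid rigidity of even-dimensional cyclic polytopes gives the key combinatorial fact that every $k$-set with $k\geq \tfrac{3}{2}d-1$ contains a facet; since all facets cut the ball, the convex hull of every such $k$-set meets the sphere, which is equivalent to the caps forming a $k$-ply system. The Sphere Separator Theorem of Miller, Teng, Thurston and Vavasis then yields a separator of the caps' intersection graph of size $O\bigl(k^{1/(d-1)}n^{1-1/(d-1)}\bigr)$. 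But two caps intersect exactly when the corresponding edge avoids the sphere, and since the cyclic polytope is $2$-neighborly and, by the $(1,d-1)$-scribed assumption, all edges avoid the ball, this intersection graph is complete and admits no sublinear separator --- a contradiction once $n$ is large. The obstruction is thus a global separator-type phenomenon, not a local angle-count; a Steinitz-type count only detects independent-set imbalances, which cyclic polytopes do not exhibit, so your approach would need a fundamentally new mechanism rather than a refinement of the defect estimate.
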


\begin{proof}[Proof sketch]
The first step is to associate to each vertex 
of a $(1,d-1)$-scribed cyclic polytope 
the spherical cap consisting of the points of $\SS^{d-1}$ visible from 
it. 
This yields a configuration of spherical caps on~$\SS^{d-1}$. These are said to form a \defn{$k$-ply} system
if no point of $\SS^{d-1}$ is contained in the interior of more than $k$ caps.

Now the Sphere Separator Theorem 
of Gary Miller, Shang-Hua Teng, Bill Thurston and Stephen Vavasis~\cite{MillerTengThurstonVavasis1997}
states that the intersection graph of
a $k$-ply system on $\SS^{d-1}$ has a separator of size $O(k^{{1}/{(d-1)}}n^{1-{1}/({d-1})})$.
The proof is astonishingly simple and beautiful (cf. \cite[Thm. 8.5]{PachAgarwal1995}): With 
a M\"obius transformation, we can assume that the origin is a center-point 
of the centers of the caps. 
The next step is to compute the probability
that a random linear hyperplane intersects a cap, which depends only 
on the area covered by the cap. Since our system is $k$-ply, we can estimate
the sum of these volumes because we know the surface area of the sphere. 
This is used to show that a random linear hyperplane hits very few caps, 
whose removal separates the graph.

So how do we tie this in with cyclic polytopes? The key observation is that a set of
points induce a $k$-ply system if and only if the convex hull of every $k$-set 
intersects the sphere. A \defn{$k$-set} is a subset of $k$ points that can be
separated from the others with a hyperplane. Even-dimensional cyclic polytopes
have a lot of nice properties, among them oriented matroid rigidity. This allows us
to show that every $k$-set with $k\geq \frac{3}{2}d - 1$ contains a facet. If the 
realization was $(0,d-1)$-scribed, this facet would intersect the sphere, and hence
the cyclic polytope induces a $k$-ply set. But in this case, the intersection
graph consists of the edges of the polytope avoiding the sphere. If all the edges
avoided the sphere, this would be a complete graph, which obviously does not have
a small separator.%
\end{proof}

\subsubsection*{Acknowledgements}
We are grateful to Karim Adiprasito, Hao Chen, Igor Rivin, Juanjo Ru\'e, Francisco Santos, and the anonymous 
referee for very valuable discussions, suggestions, and references; and to Sergei Ivanov for 
letting us reproduce his mathoverflow answer.

\bibliographystyle{spmpsci}
\bibliography{Roundness}

\end{document}